\numberwithin{equation}{section}
\newtheorem{letterthm}{Theorem}
\newtheorem{thm}{Theorem}[section]
\newtheorem{lem}[thm]{Lemma}
\newtheorem{prop}[thm]{Proposition}
\theoremstyle{definition}
\newtheorem*{definition}{Definition}
\newcommand{\rd}{\text{\rm d}}
\newcommand{\rL}{\mathord{\text{\rm L}}}
\newcommand{\ovt}{\mathbin{\overline{\otimes}}}
\newcommand{\II}{{\rm II}}
\newcommand{\III}{{\rm III}}
\begin{document}

\title{Strongly ergodic actions have local spectral gap}

\begin{abstract}
We show that an ergodic measure preserving action $\Gamma \curvearrowright (X,\mu)$ of a discrete group $\Gamma$ on a $\sigma$-finite measure space $(X,\mu)$ satisfies the local spectral gap property introduced in \cite{BISG15} if and only if it is strongly ergodic. In fact, we prove a more general local spectral gap criterion in arbitrary von Neumann algebras. Using this criterion, we also obtain a short proof of Connes' spectral gap theorem for full $\II_1$ factors \cite{Co75b} as well as its recent generalization to full type $\III$ factors \cite{Ma16}.
\end{abstract}

\address{Laboratoire de Math\'ematiques d'Orsay\\ Universit\'e Paris-Sud\\ CNRS\\ Universit\'e Paris-Saclay\\ 91405 Orsay\\ FRANCE}

\author{Amine Marrakchi}
\email{amine.marrakchi@math.u-psud.fr}

\thanks{The author is supported by ERC Starting Grant GAN 637601}

\subjclass[2010]{37A05, 37A30, 46L10}

\keywords{spectral gap; strongly ergodic; group action; full factor; maximality argument}

\maketitle


\section{Introduction}

The main goal of this paper is to show that the local spectral gap property introduced in \cite{BISG15} is, in fact, equivalent to strong ergodicity for arbitrary ergodic measure preserving actions. Before we give a precise statement, let us first recall the notion of local spectral gap.

\begin{definition}
Let $\Gamma \curvearrowright (X,\mu)$ be  an ergodic measure preserving action of a discrete group $\Gamma$ on a $\sigma$-finite measure space $(X,\mu)$. Let $B \subset X$ be a measurable subset with $0 < \mu(B) < +\infty$. We say that $\Gamma \curvearrowright (X,\mu)$ has \emph{local spectral gap} with respect to $B$ if there exists a finite set $S \subset \Gamma$ and a constant $\kappa > 0$ such that
\[ \|f\|_{2,B} \leq \kappa \sum_{g \in S} \| g \cdot f-f \|_{2,B} \]
for every function $f \in \rL^2(X,\mu)$ such that $\int_B f \: \rd \mu =0$, where 
\[ \|f\|_{2,B} := \left( \int_B |f|^2 \: \rd \mu \right)^{1/2}. \]
\end{definition}
This local spectral gap property depends in general on the choice of the set $B$ (see \cite[Remark $1.3.(4)$]{BISG15}). When $\mu$ is a probability measure and $B=X$, we just get the usual notion of spectral gap for probability measure preserving actions.

It is well-known that, for probability measure preserving actions, spectral gap implies strong ergodicity while the converse is not true. However, our main result shows that the local spectral gap property introduced in \cite{BISG15} is equivalent to strong ergodicity, for any ergodic measure preserving action.
\begin{letterthm} \label{local_gap_action}
Let $\Gamma \curvearrowright (X,\mu)$ be an ergodic measure preserving action of a discrete group $\Gamma$ on a $\sigma$-finite measure space $(X,\mu)$. Then the following are equivalent:
\begin{enumerate}
\item The action $\Gamma \curvearrowright (X,\mu)$ is strongly ergodic.
\item There exists a subset $B \subset X$ with $0 < \mu(B) < +\infty$ such that the action $\Gamma \curvearrowright (X,\mu)$ has local spectral gap with respect to $B$.
\end{enumerate}
\end{letterthm}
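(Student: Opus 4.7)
The direction $(2) \Rightarrow (1)$ follows by a direct application of the inequality. Assume local spectral gap holds on some $B$ with finite set $S$ and constant $\kappa$, and let $(A_n)$ be a sequence of finite-measure subsets with $\mu(g A_n \triangle A_n) \to 0$ for every $g \in \Gamma$. Setting $c_n = \mu(A_n \cap B)/\mu(B)$ and $f_n = 1_{A_n} - c_n 1_B$, one has $\int_B f_n \, \rd \mu = 0$ and, from almost invariance, $\|g f_n - f_n\|_{2,B} \to 0$ for $g \in S$. The inequality then forces $\|f_n\|_{2,B} \to 0$, meaning that the restriction of $1_{A_n}$ to $B$ is asymptotically constant. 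Ergodicity and $\sigma$-finiteness let one cover $X$ by countably many $\Gamma$-translates of $B$, and the same argument applied along translates propagates the asymptotic constancy to all of $X$, yielding strong ergodicity.

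For $(1) \Rightarrow (2)$ I will argue the contrapositive. Suppose local spectral gap fails for every finite-measure subset $B$. Then on each such $B$ and for every pair $(S,\varepsilon)$ with $S \subset \Gamma$ finite, there exists $f \in \rL^2(X,\mu)$ with $\int_B f \, \rd\mu = 0$, $\|f\|_{2,B} = 1$ and $\|gf-f\|_{2,B} < \varepsilon$ for all $g \in S$. The goal is to assemble these local witnesses into a \emph{global} asymptotically invariant sequence in $\rL^2(X,\mu)$, contradicting strong ergodicity. This is where the maximality argument enters. Fix $(S,\varepsilon)$ and consider pairs $(B,f)$ where $B$ has finite measure, $f \in \rL^2(X,\mu)$ is supported in a $\Gamma$-saturation of $B$, $\int_B f \, \rd\mu = 0$, $\|f\|_{2,B} \geq 1$, and $\|gf-f\|_2 < \varepsilon$ for all $g \in S$. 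Order this collection by a suitable compatible-extension relation and invoke Zorn's lemma. If a maximal element $(B_*,f_*)$ satisfied $\mu(X \setminus \Gamma B_*) > 0$, the failure hypothesis applied on a finite-measure subset of the complement would produce an admissible extension of $(B_*,f_*)$, contradicting maximality. Hence $\Gamma B_* = X$ modulo null sets; letting $(S,\varepsilon)$ range along an exhaustion of $\Gamma$ with $\varepsilon \to 0$ produces the desired nontrivial asymptotically invariant sequence.

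The main obstacle lies in the gluing step inside the maximality argument: combining witnesses $f$ on $B$ and $f'$ on a disjoint set $B'$, the sum $f+f'$ need not satisfy $\|g(f+f')-(f+f')\|_2 < \varepsilon$ because of cross terms $gf$ supported in $gB \setminus (B\cup B')$, which are not a priori small. To control these, one must choose $f'$ from a local almost invariant sequence on $B'$ that is asymptotically orthogonal to all $\Gamma$-translates of $f$ under $S$; this is arranged by a diagonal extraction inside the local almost invariant sequence. Organizing this argument cleanly is precisely what motivates the von Neumann algebraic framework of the paper: working inside $M = \rL^\infty(X,\mu) \rtimes \Gamma$ and its ultrapower $M^\omega$, the maximality argument is carried out among projections, the gluing becomes an exact algebraic operation, and the cross-term estimates dissolve into the passage to the ultralimit.
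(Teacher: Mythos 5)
Your argument for $(2)\Rightarrow(1)$ has a concrete error in the choice of test function. With $f_n = \mathbf{1}_{A_n} - c_n\mathbf{1}_B$ one gets $g\cdot f_n - f_n = (\mathbf{1}_{gA_n}-\mathbf{1}_{A_n}) - c_n(\mathbf{1}_{gB}-\mathbf{1}_B)$, and the restriction of the second term to $B$ is $-c_n\mathbf{1}_{B\setminus gB}$, so $\|g\cdot f_n - f_n\|_{2,B} \geq c_n\,\mu(B\setminus gB)^{1/2} - \mu(gA_n\triangle A_n)^{1/2}$. Since $B$ cannot be invariant under $S$ (ergodicity, $\mu(B)<\mu(X)$), this does not tend to $0$ unless $c_n\to 0$, which is what you are trying to prove. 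So ``almost invariance'' of $A_n$ does not give $\|gf_n-f_n\|_{2,B}\to 0$, and the conclusion $\|f_n\|_{2,B}\to 0$ does not follow. This direction is indeed routine, but it needs a different reduction than the one you wrote.

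The real issue is $(1)\Rightarrow(2)$: what you have is a program, not a proof, and you say so yourself (``the main obstacle lies in the gluing step''). You correctly locate the cross-term problem in the Zorn argument, but your two proposed resolutions do not resolve it: ``diagonal extraction inside the local almost invariant sequence'' is a gesture, and the appeal to the ultrapower $M^\omega$ misdescribes the paper, which explicitly advertises that its proof does \emph{not} use ultraproducts. The paper's actual mechanism, which is absent from your proposal, has two ingredients. First, the maximality argument (in the proof of Theorem \ref{local_gap_all}) runs over \emph{projections} $e\in N$ subject to the \emph{relative} almost-centrality condition $\sum_{\xi\in S}\|e\xi-\xi e\|^2\leq \eta\,\varphi(e)$; the normalization of the right-hand side by $\varphi(e)$ is precisely what makes the glued projection $q=e+f$ (with $f$ a putative bad projection in the corner $pNp$, $p=1-e$) again satisfy the condition, since the two defects add to at most $\eta\varphi(e)+\eta\varphi(f)=\eta\varphi(q)$, and then $\varphi(q)\leq \varepsilon+\tfrac12<1-\varepsilon$ gives the contradiction. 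Your witnesses carry the absolute condition $\|gf-f\|_2<\varepsilon$ with $\|f\|_{2,B}\geq 1$, which cannot be glued this way. Second, the resulting inequality holds a priori only for projections and must be upgraded to all elements of $pNp$; this is Proposition \ref{general_gap}, which rests on the Namioka-type spectral-resolution estimates of Lemma \ref{spectral_resolution} (in particular the new inequality $\|x-\varphi(x)\|_\varphi^2\leq\int_0^\infty\varphi(E_a(x^2))\varphi(1-E_a(x^2))\,\rd a$). Finally, note that the paper's proof of Theorem \ref{local_gap_action} itself is only a five-line reduction: apply Theorem \ref{local_gap_finite} to $N=q\rL^\infty(X,\mu)$ inside $q(\rL^\infty(X,\mu)\rtimes\Gamma)q$ with $\Sigma=\{qu_gq\}$, and read off the resulting corner inequality as local spectral gap with respect to $B$ where $p=\mathbf{1}_B$; all the work is in Theorem \ref{local_gap_all}.
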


Theorem \ref{local_gap_action} is deduced from the following more general local spectral gap criterion in tracial von Neumann algebras. For a non-tracial version, see Theorem \ref{local_gap_all}.

\begin{letterthm} \label{local_gap_finite}
Let $(M,\tau)$ be a tracial von Neumann algebra. Let $N \subset M$ be any von Neumann subalgebra and $\Sigma \subset M$ any self-adjoint subset. Suppose that for every bounded net $(x_i)_{i \in I}$ in $N$ which satisfies $\lim_i \|x_i a-ax_i \|_2 =0$ for all $a \in \Sigma$, we have $\lim_i \|x_i - \tau(x_i) \|_2 =0$.

Then we can find a non-zero projection $p \in N$, a finite subset $S \subset \Sigma$ and a constant $\kappa > 0$ such that for every $x \in pNp$ with $\tau(x)=0$, we have
\[  \|x\|_{2} \leq \kappa \sum_{ a \in S} \| x(pap) - (pap)  x \|_2.  \]
Moreover, we can chose $p$ to be arbitrarily close to $1$.
\end{letterthm}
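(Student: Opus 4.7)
The plan is to argue by contradiction: from the failure of the sought inequality on all projections $p \in N$ with $\tau(1-p)$ below some threshold, I would assemble a bounded, trace-zero net in $N$ that asymptotically commutes with $\Sigma$ while keeping $\|\cdot\|_2$-norm bounded below. The hypothesis would then force this $\|\cdot\|_2$-norm to tend to $0$, a contradiction. So suppose there exists $\varepsilon_0 > 0$ such that for every projection $p \in N$ with $\tau(1-p) < \varepsilon_0$, every finite $S \subset \Sigma$ and every $\kappa > 0$, there is $x \in pNp$ with $\tau(x) = 0$ violating the stated inequality.

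The passage from local to global is done via the elementary identity
\[ [y, a] \;=\; [y, pap] + y \cdot a(1-p) - (1-p) a \cdot y, \qquad y \in pNp,\ a \in M, \]
(immediate from $yp = y = py$), combined with the estimate $\|y \cdot a(1-p)\|_2 \leq \|y\|_\infty \|a\| \sqrt{\tau(1-p)}$ and its symmetric counterpart. Hence any uniformly norm-bounded net $(y_i)$ with $y_i \in p_i N p_i$, $\tau(1-p_i) \to 0$, and $\|[y_i, p_i a p_i]\|_2 \to 0$ for each $a \in \Sigma$ satisfies $\|[y_i, a]\|_2 \to 0$ in $N$.

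The technical heart is the production, for each $(p, S, \kappa)$ as above, of a bounded, $\|\cdot\|_2$-nontrivial witness $y \in pNp$: namely, $\tau(y) = 0$, $\|y\|_\infty \leq C$, $\|y\|_2 \geq \delta$ and $\sum_{a \in S}\|[y, pap]\|_2 < 1/\kappa$ for universal constants $C, \delta > 0$. The obstacle is scaling: a bad $x$ normalised so that $\|x\|_\infty = 1$ can have $\|x\|_2$ arbitrarily small (a spike), while $\|x\|_2 = 1$ loses control of $\|x\|_\infty$. To bypass this I would (i) pass to the self-adjoint part of $x$, using self-adjointness of $\Sigma$; (ii) extract from its spectral resolution $\{E_t\}_{t \in \R}$ a projection $e = E_{t^*} \in pNp$ with $\tau(e)$ comparable to $\tau(p)/2$ and $\sum_{a \in S}\|[e, pap]\|_2$ still small; and (iii) set $y := e - (\tau(e)/\tau(p))\, p$, which lies in $pNp$, satisfies $\tau(y) = 0$, $\|y\|_\infty \leq 2$, $\|y\|_2^2 = \tau(e)\,\tau(p-e)/\tau(p)$ and $\|[y, pap]\|_2 = \|[e, pap]\|_2$ since $p$ commutes with $pap$. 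Step (ii) is the main hurdle and is where the maximality argument advertised in the keywords comes in: one locates $t^*$ via a spectral-calculus estimate relating commutators of spectral projections of $x$ to $\|[x, a]\|_2$ (an integral identity of Birman--Solomyak type), combined with a pigeonhole on $t$ and a Zorn maximality on orthogonal subprojections to ensure that the extracted projections accumulate back to a projection comparable in size to $p$.

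With these bounded witnesses in hand, I would diagonalise along a net $(p_i, S_i, \kappa_i)$ with $\tau(1-p_i) \to 0$, finite $S_i$ exhausting $\Sigma$, and $\kappa_i \to \infty$. Setting $y_i := y_{p_i, S_i, \kappa_i}$, the identity and bounds in the second paragraph show that $(y_i)$ is a uniformly norm-bounded net in $N$ with $\tau(y_i) = 0$, $\|y_i\|_2 \geq \delta$, and $\|[y_i, a]\|_2 \to 0$ for every $a \in \Sigma$. The hypothesis forces $\|y_i\|_2 \to 0$, contradicting the lower bound. Since $\varepsilon_0$ in the opening setup was arbitrary, this shows that for every $\varepsilon > 0$ there is a projection $p \in N$ with $\tau(1-p) < \varepsilon$ on which the local spectral gap holds, establishing both existence of $p$ and the ``arbitrarily close to $1$'' clause.
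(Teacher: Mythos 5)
Your overall architecture (negate, build a bounded trace\nobreakdash-zero witness with $\|\cdot\|_2$ bounded below and small commutators, diagonalise, contradict the hypothesis) is reasonable, your local-to-global commutator identity in the second paragraph is correct, and the spectral-calculus estimate you invoke is indeed the right tool (it is the Connes--St{\o}rmer/Namioka inequality, Lemma 2.3(1) of the paper). But step (ii) of your witness production is a genuine gap, and it is exactly where the difficulty of the theorem lives. From a bad element $x \in pNp$ with $\tau(x)=0$, $\|x\|_2=1$ and $\sum_{a}\|[x,pap]\|_2 < 1/\kappa$, the integral inequality gives you $\int_0^\infty \sum_a\|[E_t,pap]\|_2^2\,\rd t$ small, and Lemma 2.3(2) gives $\int_0^\infty \tau(E_t)\tau(p-E_t)\,\rd t \geq 1$; pigeonholing these two facts only produces a level $t^*$ at which $\sum_a\|[E_{t^*},pap]\|_2^2$ is small \emph{relative to} $\tau(E_{t^*})\tau(p-E_{t^*})$ --- it does not produce one at which $\tau(E_{t^*})$ is comparable to $\tau(p)/2$. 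Concretely, for a spike such as $x = c_n(n\mathbf{1}_{B_n} - n\mu(B_n))$ with $\mu(B_n)=1/n^2$, normalised in $\|\cdot\|_2$, \emph{every} spectral projection has trace close to $0$ or close to $\tau(p)$, so no choice of $t^*$ yields $\|y\|_2^2 = \tau(e)\tau(p-e)/\tau(p) \geq \delta$. There is also a structural reason your step cannot be repaired: your contradiction hypothesis allows $p_i = 1$ for all $i$ (since $\tau(1-1)=0<\varepsilon_0$), so if the witness production worked it would prove the spectral gap inequality at $p=1$, i.e.\ that strong ergodicity implies genuine spectral gap --- which is false for p.m.p.\ actions, as the introduction of the paper recalls. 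Any correct proof must therefore break down when forced to $p=1$, and yours does not.

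The paper's proof circumvents this by never applying the hypothesis to unbounded elements at all: it applies it only to \emph{projections} (which are automatically bounded by $1$), obtaining a finite $S\subset\Sigma$ and $\eta>0$ such that $\sum_{\xi\in S}\|[q,\xi]\|^2\leq\eta$ forces $\min(\varphi(q),\varphi(1-q))\leq\varepsilon$. The Zorn maximality is then run not inside the analysis of a single bad $x$, but over the family $\Lambda$ of \emph{all} projections $e$ with $\varphi(e)\leq\varepsilon$ and $\sum_{\xi\in S}\|[e,\xi]\|^2\leq\eta\varphi(e)$; the sought projection is $p=1-e$ for a maximal $e\in\Lambda$. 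Maximality plus the threshold dichotomy (via the arithmetic $\varphi(e)+\varphi(f)\leq\varepsilon+\tfrac12<1-\varepsilon$) shows that every subprojection $f\leq p$ satisfies $\varphi(f)\varphi(p-f)\leq\frac{1}{\eta}\sum_{\xi\in S}\|[f,p\xi p]\|^2$. Only then is the Namioka-type machinery used, in the opposite direction from yours: Proposition 2.2 upgrades the inequality from projections of $pNp$ to arbitrary elements of $pNp$, using Lemma 2.3(1) to dominate $\int\|[E_t(x^2),\xi]\|^2\rd t$ by $C\|x\|_\varphi\|[x,\xi]\|$ and the new Lemma 2.3(2) to dominate $\|x-\varphi(x)\|_\varphi^2$ by $\int\varphi(E_t)\varphi(1-E_t)\rd t$. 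You should restructure your argument along these lines: the accumulation of small almost-commuting projections across the whole algebra, not the dissection of one unbounded almost-commuting element, is what locates $p$.
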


We provide other applications of Theorem \ref{local_gap_finite} in the last section of this article.

\section{The local spectral gap property in arbitrary von Neumann algebras}

In this section, we prove Theorem \ref{local_gap_finite}. In fact, we prove the following more general version for arbitrary von Neumann algebras (not necessarily tracial). For any faithful normal state $\varphi$ on a von Neumann algebra $M$, we will say that a vector $\xi \in \rL^2(M)$ is \emph{$\varphi$-bounded} if $\xi \in M\xi_\varphi$ where $\xi_\varphi \in \rL^2(M)$ is the cyclic vector associated to $\varphi$.

\begin{thm} \label{local_gap_all}
Let $M$ be any von Neumann algebra with a faithful normal state $\varphi$. Let $N \subset M$ be any von Neumann subalgebra and $\Sigma=J\Sigma \subset \rL^2(M)$ any self-adjoint subset of $\varphi$-bounded elements. Suppose that for every bounded net $(x_i)_{i \in I}$ in $N$ which satisfies $\lim_i \| x_i \xi-\xi x_i \| = 0$ for all $\xi \in \Sigma$, we have $\lim_i \| x_i - \varphi(x_i) \|_\varphi =0$.

Then we can find a non-zero projection $p \in N$, a finite subset $S \subset \Sigma$ and a constant $\kappa > 0$ such that for every $x \in pNp$ with $\varphi(x)=0$ we have
\[  \|x\|_{\varphi} \leq \kappa \sum_{ \xi \in S} \| p(x\xi - \xi  x)p \|.  \]
Moreover, we can chose $p$ to be arbitrarily close to $1$.
\end{thm}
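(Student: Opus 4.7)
The plan is a proof by contradiction, with the main analytic content being a spectral-cutoff truncation that upgrades $L^2$-bounded almost-central witnesses to operator-norm bounded ones, so that the hypothesis applies. I first aim for the conclusion at $p=1$ (which trivially gives the ``close to $1$'' clause), and fall back on a maximality argument in corners if only a weaker version is directly accessible.

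Assume for contradiction that the conclusion fails at $p=1$. Fixing an exhaustion of $\Sigma$ by finite sets (directed by inclusion) together with $\kappa\to\infty$ produces a net $x_i \in N$ with $\varphi(x_i)=0$, $\|x_i\|_\varphi=1$, and $\|x_i\xi-\xi x_i\|\to 0$ for every $\xi\in\Sigma$. The $J$-invariance $J\Sigma=\Sigma$ makes almost-$\Sigma$-commutation stable under adjoint (by transporting commutators via $J$ using the modular identities), so after replacing $x_i$ by its self-adjoint real part $h_i=(x_i+x_i^*)/2$ the analogous bounds persist up to constants: $\varphi(h_i)=0$, $\|h_i\|_\varphi$ is bounded below on a subnet, and $\|h_i\xi-\xi h_i\|\to 0$ for every $\xi\in\Sigma$.

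The heart of the argument is the truncation. Apply the spectral theorem $h_i=\int t\,dE_t^{(i)}$ and a Popa-type integral estimate
\[
\int_{\R}\|E_t^{(i)}\xi-\xi E_t^{(i)}\|^{2}\,dt\;\lesssim\;\|h_i\xi-\xi h_i\|\cdot\|h_i\xi+\xi h_i\|,
\]
valid for $\varphi$-bounded $\xi$ (with the appropriate modular adjustment), to select spectral levels $a_i<b_i$ for which $q_i:=E_{b_i}^{(i)}-E_{a_i}^{(i)}$ satisfies $\|q_i\xi-\xi q_i\|\to 0$ for each $\xi\in\Sigma$. The spectral measure of $h_i$ with respect to $\varphi$ has mean $0$ and bounded second moment, so a Chebyshev and intermediate-value argument lets me arrange $\varphi(q_i)\in[\delta,1-\delta]$ for some fixed $\delta>0$. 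Now $(q_i)\subset N$ is bounded by $1$ in operator norm and almost central for $\Sigma$; the hypothesis yields $\|q_i-\varphi(q_i)\|_\varphi\to 0$, contradicting $\|q_i-\varphi(q_i)\|_\varphi^{2}=\varphi(q_i)(1-\varphi(q_i))\geq\delta(1-\delta)>0$.

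The main obstacle is the truncation step in the non-tracial setting. In the tracial case the integral inequality is transparent via orthogonality of spectral subspaces in $L^{2}(M,\tau)$; in the modular setting one must use the $\varphi$-boundedness of $\xi$, the $J$-invariance of $\Sigma$, and the two-sided compression by $p$ in concert to compensate for the asymmetry of the $\varphi$-inner product, while simultaneously guaranteeing that $\varphi(q_i)$ remains non-degenerate. Should the direct $p=1$ argument be obstructed, the fallback is a Zorn/maximality argument over orthogonal families of projections in $N$ each satisfying the conclusion, reruning the above inside each corner $(1-P)N(1-P)$ with $\Sigma'=\{(1-P)\xi(1-P):\xi\in\Sigma\}$ and verifying that the hypothesis descends (by extending bounded almost-$\Sigma'$-central nets from the corner to $N$); finite subsums of the maximal family, paired with unions of the corresponding $S_\alpha$, then yield $p$ with $\varphi(1-p)$ arbitrarily small.
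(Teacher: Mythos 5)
There is a genuine gap, and it sits exactly at the heart of your argument: the ``Chebyshev and intermediate-value argument'' used to select a spectral cutoff $q_i$ with $\varphi(q_i)\in[\delta,1-\delta]$ does not exist. The map $a\mapsto\varphi(E_a(h_i^2))$ is decreasing but not continuous, and in the relevant examples it jumps straight from $1$ to a quantity tending to $0$. Concretely, take $N=M=\rL^\infty(X,\mu)$ for a strongly ergodic p.m.p.\ action without spectral gap: the almost-central, $\varphi$-normalized witnesses are $h_n=(\mathbf{1}_{A_n}-\mu(A_n))/\sqrt{\mu(A_n)(1-\mu(A_n))}$ with $\mu(A_n)\to 0$, and \emph{every} spectral projection of $h_n^2$ is $0$, $1$, or $\mathbf{1}_{A_n}$, whose trace tends to $0$. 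So no truncation produces a projection with non-degenerate trace, the hypothesis applied to $q_i=\mathbf{1}_{A_n}$ yields $\varphi(q_i)(1-\varphi(q_i))\to 0$ without any contradiction, and your argument halts. This failure is not an accident of the method: your primary strategy would prove the conclusion with $p=1$, i.e.\ that strong ergodicity implies genuine spectral gap, which is false (the paper points this out in the introduction, and its last Proposition shows $p=1$ is attainable only under the strictly stronger hypothesis of non-inner-amenability in the group case). Any correct proof must produce a nontrivial corner, and your fallback does not do so: ``rerunning the above inside each corner'' hits the identical truncation failure, and it is also unclear how a bounded almost-$\Sigma'$-central net in a corner extends to an almost-$\Sigma$-central net in $N$ so that the hypothesis descends.

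For comparison, the paper's proof uses your key integral inequality (the Connes--St{\o}rmer/Namioka estimate $\int_0^\infty\|E_a(x^2)\xi-\xi E_a(y^2)\|^2\,\rd a\le\|x\xi+\xi y\|\cdot\|x\xi-\xi y\|$) but in the \emph{opposite} direction: it first establishes the inequality $\varphi(f)\varphi(p-f)\le\frac1\eta\sum_{\xi\in S}\|f(p\xi p)-(p\xi p)f\|^2$ for \emph{all} projections $f$ in a corner, and then integrates over all spectral levels (together with the estimate $\|x-\varphi(x)\|_\varphi^2\le\int_0^\infty\varphi(E_a(x^2))\varphi(1-E_a(x^2))\,\rd a$) to pass to general elements; no single good level need be selected. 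The corner itself is produced by a Zorn argument over the set $\Lambda$ of ``bad'' projections $e$ with $\varphi(e)\le\varepsilon$ and $\sum_{\xi\in S}\|e\xi-\xi e\|^2\le\eta\varphi(e)$: a maximal such $e$ absorbs all small almost-invariant projections, and maximality forces the projection inequality on $p=1-e$. That accumulation of small almost-invariant projections is the missing idea; your Zorn argument over projections ``each satisfying the conclusion'' is not a substitute for it.
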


\begin{prop} \label{general_gap}
Let $M$ be any von Neumann algebra and let $\varphi$ be any normal state. Take $S \subset \rL^2(M)$ any finite self-adjoint subset of $\varphi$-bounded elements. Let $N$ any von Neumann subalgebra of $M$. Then the following are equivalent: 
\begin{enumerate}
\item  There exists a constant $\kappa > 0$ such that for any projection $p \in N$ we have
\[ \varphi(p)\varphi(1-p) \leq \kappa \sum_{\xi \in S} \| p \xi-\xi p \|^2 \]
\item There exists a constant $\kappa' > 0$ such that for all $x \in N$ we have
\[ \| x-\varphi(x) \|_\varphi \leq \kappa' \sum_{\xi \in S} \| x \xi - \xi x\| \]
\end{enumerate}
\end{prop}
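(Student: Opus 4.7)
First the easy direction (2) $\Rightarrow$ (1). I would substitute the projection $x = p$ into (2); the left-hand side squared equals $\varphi(p^2) - \varphi(p)^2 = \varphi(p)\varphi(1-p)$, and Cauchy--Schwarz on the finite sum $S$ yields $(\sum_\xi \|[p,\xi]\|)^2 \leq |S| \sum_\xi \|[p,\xi]\|^2$, whence (1) holds with $\kappa = (\kappa')^2 |S|$.

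For the harder direction (1) $\Rightarrow$ (2), I plan to reduce first to the case where $x = x^* \in N$ is self-adjoint with $\varphi(x) = 0$ (splitting into real and imaginary parts) and, by homogeneity of the inequality, to normalise $\|x\|_\infty \leq 1$. The key tool is the spectral layer-cake decomposition. Set $p_t := \chi_{(t,\infty)}(x) \in N$ for $t \in [-1,1]$; then $x + \mathbf{1} = \int_{-1}^{1} p_t\, dt$ holds as a Bochner integral in $L^2(M,\varphi)$, and subtracting the constant piece using $\varphi(x) = 0$ gives
\[
  x \;=\; \int_{-1}^{1} \bigl(p_t - \varphi(p_t)\bigr)\, dt.
\]
From (1), $\|p_t - \varphi(p_t)\|_\varphi \leq \sqrt{\kappa}\, \sum_{\xi \in S} \|[p_t,\xi]\|$ by Cauchy--Schwarz on $S$, and Minkowski in $L^2$ yields
\[
  \|x\|_\varphi \;\leq\; \sqrt{\kappa}\,\sum_{\xi \in S} \int_{-1}^{1} \|[p_t,\xi]\|\, dt,
\]
while the analogous Bochner identity $[x,\xi] = \int_{-1}^{1}[p_t,\xi]\, dt$ holds in $L^2(M)$.

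The main obstacle is to bound $\int_{-1}^{1}\|[p_t,\xi]\|\, dt$ by a constant times $\|[x,\xi]\|$; the triangle inequality only delivers the reverse bound. To overcome this I would exploit the monotonicity $p_s p_t = p_{\max(s,t)}$ of the spectral family, which produces a definite orthogonality structure among $\{[p_t,\xi]\}_{t \in [-1,1]}$ in $L^2(M,\varphi)$. Expanding $\|[x,\xi]\|^2 = \iint \langle [p_s,\xi],[p_t,\xi]\rangle_\varphi\, ds\, dt$ by Fubini and combining with a Cauchy--Schwarz / double-operator-integral estimate (making essential use of $\|x\|_\infty \leq 1$ to keep the integration range bounded) should yield an inequality of the shape $\int_{-1}^{1}\|[p_t,\xi]\|\, dt \leq C\,\|\xi\|^{1/2}\|[x,\xi]\|^{1/2}$. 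Substituting back, applying Cauchy--Schwarz on the sum over the finite set $S$, and using scaling in $x$ to absorb the factor $\|x\|_\infty$ then yields (2) with an appropriate constant $\kappa'$.
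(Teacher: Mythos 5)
Your direction (2) $\Rightarrow$ (1) and the reduction to self-adjoint $x$ with $\varphi(x)=0$ are fine, and the overall strategy (run hypothesis (1) through the spectral projections of $x$ and control the resulting commutators by a Namioka-type estimate) is the same family of ideas as the paper's. But there is a genuine gap in (1) $\Rightarrow$ (2): the way you assemble the pieces can only yield a square-root bound. Your chain gives, for $\|x\|_\infty\le 1$,
\[ \|x\|_\varphi\;\le\;\sqrt{\kappa}\,\sum_{\xi\in S}\int_{-1}^{1}\|p_t\xi-\xi p_t\|\,\rd t\;\le\;C\sum_{\xi\in S}\|\xi\|^{1/2}\,\|x\xi-\xi x\|^{1/2}, \]
which is strictly weaker than (2) when the commutators are small. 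Rescaling does not repair this: (2) is homogeneous of degree one in $x$ on both sides, while your inequality has degree one on the left and one half on the right, so undoing the normalization reintroduces a factor $\|x\|_\infty^{1/2}$, and $\|x\|_\infty/\|x\|_\varphi$ is unbounded on $N$. Moreover the loss is intrinsic to the quantity $\int\|p_t\xi-\xi p_t\|\,\rd t$: in the commutative model, for a kernel $\xi(s,s')$ supported where $|s-s'|\approx\varepsilon$ and spread over all levels $t$, one has $\int_{-1}^1\|p_t\xi-\xi p_t\|\,\rd t\approx\varepsilon^{1/2}\|\xi\|$ while $\|x\xi-\xi x\|\approx\varepsilon\|\xi\|$; so the estimate $\int\|p_t\xi-\xi p_t\|\,\rd t\le C\|\xi\|^{1/2}\|x\xi-\xi x\|^{1/2}$ you are aiming for is essentially sharp, and no double-operator-integral refinement along this route will upgrade it to $O(\|x\xi-\xi x\|)$.

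The repair requires two ingredients, both absent from your sketch. First, one must not take square roots before integrating over the spectral parameter: instead of Minkowski applied to $x=\int(p_t-\varphi(p_t))\,\rd t$, the paper proves (Lemma~\ref{spectral_resolution}\,(2), via the tensor-product trick $E_a(x^2)\otimes E_a(x^2)\le E_a(x\otimes x)$) that $\|x-\varphi(x)\|_\varphi^2\le\int_0^\infty\varphi(E_a(x^2))\varphi(1-E_a(x^2))\,\rd a$ for $x\ge 0$; feeding hypothesis (1) into the integrand and applying the Connes--St{\o}rmer inequality $\int_0^\infty\|E_a(x^2)\xi-\xi E_a(x^2)\|^2\,\rd a\le\|x\xi+\xi x\|\cdot\|x\xi-\xi x\|$ yields $\|x-\varphi(x)\|_\varphi^2\le\kappa\sum_\xi\|x\xi+\xi x\|\,\|x\xi-\xi x\|$ with no spurious length factor. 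Second --- and this is the hypothesis your argument never uses --- the $\varphi$-boundedness and self-adjointness of $S$ give $\|x\xi+\xi x\|\le C\|x\|_\varphi$ rather than the trivial $2\|x\|_\infty\|\xi\|$; a factor $\|x\|_\varphi$ then cancels on both sides and one lands exactly on the linear inequality $\|x-\varphi(x)\|_\varphi\le C\kappa\sum_\xi\|x\xi-\xi x\|$ for positive $x$. The passage from positive to self-adjoint to general $x$ is then as you describe. Any correct proof must exploit the $\varphi$-boundedness at this point, since it is precisely what converts an $\|x\|_\infty$-type bound into an $\|x\|_\varphi$-type bound.
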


The proof of Proposition \ref{general_gap} relies on the following lemma which is inspired by Namioka's trick. Our new input is item $(2)$.

\begin{lem} \label{spectral_resolution}
Let $M$ be any von Neumann algebra. For $a \geq 0$ and $x \in M^+$, we use the notation $E_a(x)=1_{[a,+\infty)}(x)$.
\begin{enumerate}
\item For every positive elements $x,y \in M^+$ and every $\xi \in \rL^2(M)$, we have
\[ \int_0^\infty \| E_a(x^2) \xi - \xi E_a(y^2) \|^2 \: \rd a \leq \| x \xi +\xi y \| \cdot \| x \xi- \xi y \| \]
\item For every positive elment $x \in M^+$ and every state $\varphi \in M^+_*$, we have 
\[ \| x-\varphi(x) \|_\varphi^2  \leq \int_0^\infty \varphi( E_{a}(x^{2})) \varphi \left(1-E_{a}(x^{2}) \right) \rd a \]
\end{enumerate}
\end{lem}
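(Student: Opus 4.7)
My plan is to treat parts~(1) and~(2) separately, as they require different techniques.

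For part~(1), the first step is a $2\times 2$ matrix amplification trick that reduces the claim to the case $x = y$. Form $X = \begin{pmatrix} x & 0 \\ 0 & y \end{pmatrix} \in M_2(M)^+$ and $\Xi = \begin{pmatrix} 0 & \xi \\ 0 & 0\end{pmatrix} \in \rL^2(M_2(M))$. A direct matrix computation shows that $E_a(X^2)\Xi - \Xi E_a(X^2)$ and $X\Xi \pm \Xi X$ each have a single nonzero entry in the $(1,2)$ slot, equal respectively to $E_a(x^2)\xi - \xi E_a(y^2)$ and $x\xi \pm \xi y$. Since the $\rL^2(M_2(M))$-norm of such an off-diagonal matrix is a fixed scalar multiple (depending only on the matrix-trace normalization) of the $\rL^2(M)$-norm of its $(1,2)$-entry, this universal factor cancels from both sides of the target inequality, and it suffices to establish~(1) when $x = y$.

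In that case, denote by $\lambda$ and $\rho$ the commuting, bounded, positive operators of left and right multiplication by $x$ on $\rL^2(M)$. The joint spectral theorem provides a finite positive Borel measure $\mu_\xi$ on $[0,\|x\|]^2$ such that $\|g(\lambda,\rho)\xi\|^2 = \int|g(s,t)|^2\,\rd\mu_\xi(s,t)$ for every bounded Borel $g:\R^2 \to \C$. Apply this with $g_a(s,t) = \mathbf{1}_{[a,\infty)}(s^2) - \mathbf{1}_{[a,\infty)}(t^2)$, integrate over $a$, and swap the integrals by Tonelli, using the pointwise identity $\int_0^\infty |g_a(s,t)|^2\,\rd a = |s^2 - t^2|$ (valid for $s,t\ge 0$), to get
\[ \int_0^\infty \|E_a(x^2)\xi - \xi E_a(x^2)\|^2 \, \rd a \;=\; \int (s+t)|s-t|\,\rd\mu_\xi(s,t). \]
A final Cauchy--Schwarz in the measure $\mu_\xi$ bounds the right-hand side by $\|(\lambda+\rho)\xi\|\cdot\|(\lambda-\rho)\xi\| = \|x\xi + \xi x\|\cdot\|x\xi - \xi x\|$, yielding~(1).

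For part~(2), I would start from the identity $\|x - \varphi(x)\|_\varphi^2 = \varphi(x^2) - \varphi(x)^2$, which holds because $x = x^*$. The layer cake formula gives $\varphi(x^2) = \int_0^\infty \varphi(E_a(x^2))\,\rd a$, and since $E_t(x) = E_{t^2}(x^2)$ for $t \ge 0$, it also gives $\varphi(x) = \int_0^\infty \varphi(E_{t^2}(x^2))\,\rd t$. Setting $F(a) := \varphi(E_a(x^2))$, expanding the right-hand side of the stated inequality as $\varphi(x^2) - \int_0^\infty F(a)^2\,\rd a$, and applying the change of variable $a = t^2$, the claim reduces to
\[ \int_0^\infty 2t\,F(t^2)^2\,\rd t \;\leq\; \left(\int_0^\infty F(t^2)\,\rd t\right)^2. \]
Expanding the right-hand side via Fubini as $2\int_0^\infty F(t^2)\left(\int_0^t F(s^2)\,\rd s\right)\rd t$ and using that $F$ is decreasing, hence $\int_0^t F(s^2)\,\rd s \ge tF(t^2)$, finishes~(2).

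I do not foresee a serious obstacle. Part~(1) combines the amplification trick, the joint spectral theorem for commuting positive operators, and a plain Cauchy--Schwarz; part~(2) is a layer cake computation together with the monotonicity of $F$. The only bookkeeping point requiring care is the norm-comparison in the amplification step: one must verify that the $\rL^2(M_2(M))$-norm of each off-diagonal matrix appearing is proportional, with the same constant, to the $\rL^2(M)$-norm of its sole nonzero entry, so that the reduction is a faithful rewriting of the inequality.
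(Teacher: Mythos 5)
Your proposal is correct, and it diverges from the paper in an interesting way on both items. For item (1) the paper gives no argument at all — it simply cites the proof of \cite[Theorem 2]{CS78}; your joint-spectral-measure computation (the measure $\mu_\xi$ for the commuting pair $(\lambda,\rho)$, the identity $\int_0^\infty |g_a(s,t)|^2\,\rd a = |s^2-t^2| = (s+t)|s-t|$, then Cauchy--Schwarz) is essentially the Connes--St\o rmer argument written out in full, so you have supplied the omitted proof rather than replaced it. Your preliminary $2\times 2$ amplification is harmless and the norm-proportionality you flag does hold (the off-diagonal slot carries a fixed factor that cancels from both sides), but it is also unnecessary: left multiplication by $x$ and right multiplication by $y$ already commute on $\rL^2(M)$ even when $x \neq y$, so you could run the joint spectral theorem directly on $(\lambda_x,\rho_y)$. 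For item (2) your route is genuinely different from the paper's. The paper reduces, exactly as you do, to showing $\int_0^\infty \varphi(E_a(x^2))^2\,\rd a \leq \varphi(x)^2$, but then passes to $M \ovt M$ and uses the operator inequality $E_a(x^2)\otimes E_a(x^2) \leq E_a(x\otimes x)$ followed by $\varphi\otimes\varphi$ and the layer-cake formula for $x \otimes x$. You instead stay entirely at the level of the scalar distribution function $F(a)=\varphi(E_a(x^2))$: writing $\varphi(x)=\int_0^\infty F(t^2)\,\rd t$, symmetrizing the square of this integral, and using that $F$ is decreasing to get $\int_0^t F(s^2)\,\rd s \geq tF(t^2)$. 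This is a valid Chebyshev-type argument, arguably more elementary since it avoids tensor products of von Neumann algebras, at the cost of being slightly longer; the paper's tensor trick is a one-line positivity statement but requires recognizing the inequality $\mathbf{1}_{[a,\infty)}(s^2)\mathbf{1}_{[a,\infty)}(t^2) \leq \mathbf{1}_{[a,\infty)}(st)$ inside $M\ovt M$. Both proofs are complete and correct.
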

\begin{proof}
For item $(1)$, see the proof of \cite[Theorem 2]{CS78}. We will only prove $(2)$. First, note that $\| x- \varphi(x) \|_\varphi^2=\varphi(x^2)-\varphi(x)^2$ and that
\[ \varphi(x^2)=\int_0^\infty \varphi(E_a(x^2)) \: \rd a\]
Therefore, we only have to show that 
\[ \int_0^\infty \varphi( E_{a}(x^{2}))^2 \: \rd a  \leq \varphi(x)^2\]
On $M \ovt M$, we have $E_{a}(x^{2}) \otimes E_{a}(x^{2}) \leq E_{a}(x \otimes x)$. Hence, by appying $\varphi \otimes \varphi$ we get 
\[ \varphi( E_{a}(x^{2}))^2 \leq (\varphi \otimes \varphi)(E_a(x \otimes x)) \]
and thus, after integrating over $a$, we finally  get
\[ \int_0^\infty \varphi( E_{a}(x^{2}))^2 \: \rd a  \leq (\varphi \otimes \varphi)(x \otimes x)=\varphi(x)^2\]
\end{proof}

\begin{proof}[Proof of Proposition \ref{general_gap}]
$(2)$ clearly implies $(1)$ when applied to a projection.

Now assume that $(1)$ holds for some constant $\kappa > 0$ and let us show that $(2)$ also holds for some constant $\kappa'> 0$. A direct application of Lemma \ref{spectral_resolution} shows that for every element $x \in N^+$, we have
 \[ \|x-\varphi(x) \|_\varphi^2 \leq \kappa  \sum_{\xi \in S} \|x \xi +\xi x \|  \cdot \|x \xi-\xi x \| \]
Since the elements of $S$ are $\varphi$-bounded and $S$ is self-adjoint, we can find a constant $C > 0$ such that for all $x \in N^+$, we have $\|x\xi+\xi x \|\leq \|x\xi \|+\|\xi x  \| \leq C \|x\|_\varphi$. Hence, we get 
 \[ \forall x \in N^+, \quad \|x-\varphi(x) \|_\varphi^2 \leq C\kappa \|x\|_\varphi \sum_{\xi \in S}  \|x \xi-\xi x \| \]
Now, for every $x=x^* \in N$ with $\varphi(x)=0$, write $x=x_+-x_-$ where $x_+,x_- \in N^+$ and $x_+x_-=0$. Then we have
\[ \|x\|_\varphi^2 \leq 2(\|x_+-\varphi(x_+) \|_\varphi^2+\|x_--\varphi(x_-) \|_\varphi^2) \leq 2 C\kappa \|x\|_\varphi \sum_{\xi \in S} (\|x_+ \xi-\xi x_+ \|+\|x_-\xi -\xi x_- \|)\]
and since $\|x_{\pm}\xi -\xi  x_{\pm}\| \leq \|x \xi-\xi x \|$, we obtain
\[ \|x\|_\varphi \leq 4 C \kappa \sum_{\xi \in S} \|x \xi-\xi x \| \]
 By applying this inequality to $x-\varphi(x)$, we obtain $(2)$ for every self-adjoint $x \in N$. Finally, since $S$ is self-adjoint, it is easy to obtain $(2)$ for every element $x \in N$ by decomposing it into its real and imaginary part.
\end{proof}

\begin{proof}[Proof of Theorem \ref{local_gap_all}]
Fix $0 < \varepsilon < \frac{1}{4}$. Then, there exists a finite self-adjoint subset $S \subset \Sigma$ and $\eta > 0 $ such that for every projection $p \in N$ we have 
\[ \sum_{ \xi \in S} \|p \xi-\xi p \|_2^2 \leq \eta  \; \Longrightarrow \; \min(\varphi(p),\varphi(1-p)) \leq  \varepsilon \]
Consider the set $\Lambda$ of all projections $e$ in $N$ such that $\varphi(e) \leq \varepsilon$ and
\[ \sum_{ \xi \in S} \|e \xi-\xi e \|^2 \leq \eta \varphi(e) \]
Then $\Lambda$ is closed for the strong topology, hence it is an inductive set. Therefore, by Zorn's lemma, we can choose $e$ a maximal element of $\Lambda$. Let $p=1-e$. Take a projection $f \in pNp$. Suppose that
\[ \varphi(f)\varphi(p-f) > \frac{1}{\eta} \sum_{\xi \in S} \| f (p\xi p) - (p\xi p)f \|^2 \]
Then up to replacing $f$ by $p-f$, we can suppose that $\varphi(f) \leq \frac{1}{2}$. Now let $q=e+f$. Then we can check that
\[ \sum_{\xi \in S} \|q \xi - \xi q \|^2 \leq \sum_{ \xi \in S} \|e \xi-\xi e \|^2+\sum_{\xi \in S} \| f(p \xi p) - (p\xi p)f \|^2 \leq \eta\varphi(e) + \eta\varphi(f) = \eta \varphi(q)\]
But, since $e$ is maximal in $\Lambda$, we know that $q \notin \Lambda$. Hence we must have $\varphi(q) > \varepsilon$. Therefore, by the choice of $S$ and $\eta$, we must have $\varphi(q) \geq 1- \varepsilon$. But $\varphi(q)=\varphi(e)+\varphi(f) \leq \varepsilon + \frac{1}{2}$. Since $\varepsilon < \frac{1}{4}$, this is a contradiction.  Hence, for all projections $f \in pNp$, we have
\[ \varphi(f)\varphi(p-f) \leq  \frac{1}{\eta} \sum_{\xi \in S} \| f (p \xi p) - (p\xi  p)f \|^2 \]
Finally, for $\varphi'=\frac{1}{\varphi(p)}p\varphi p$ and $S'=pSp$, we can use Proposition \ref{general_gap} to conclude that there exists $\kappa' > 0$ such that
\[ \forall x \in p N p, \quad  \|x-\varphi'(x)\|_{\varphi'} \leq \kappa' \sum_{ \xi \in S} \| x(p\xi p)-(p\xi p) x \|.  \]
\end{proof}

\section{Applications}

We first prove our main theorem which motivated Theorem \ref{local_gap_finite}.

\begin{proof}[Proof of Theorem \ref{local_gap_action}] We only have to prove that $(1)$ implies $(3)$. Let $A \subset X$ be a subset with $\mu(A)=1$. Let $q=\mathbf{1}_A \in \rL^\infty(X,\mu)$. Let $M=q(\rL^\infty(X,\mu) \rtimes \Gamma)q$, $N=q\rL^\infty(X,\mu)$ and $\Sigma=\{ qu_gq \mid g \in \Gamma \} \subset M$. By strong ergodicity of the action, we know that every bounded $\Sigma$-central net in $N$ is trivial. Hence, by Theorem \ref{local_gap_finite}, we can find a non-zero projection $p \in N$, a finite subset $S \subset \Gamma$ and a constant $\kappa > 0$ such that for all $x \in pNp$ with $\tau(x)=0$ we have
\[ \| x \|_2 \leq \kappa \sum_{g \in S} \| x(pu_gp)-(pu_gp)x \|_2  \]
Now, take $x \in \rL^\infty(X,\mu)$ with $\tau(px)=0$. By applying the previous inequality to $px \in pNp$ we obtain
\[ \|px\|_2 \leq \kappa \sum_{g \in S} \| p(xu_g-u_gx)p \|_2 \leq \kappa \sum_{g \in S} \| p(x-u_gxu_g^*) \|_2 \]
and this is exactly the local spectral gap property with respect to $B \subset X$ where $p=\mathbf{1}_B$. 
\end{proof}

Our next application is a new proof and a generalization of \cite[Theorem 2.1]{Co75b}. Note that the original proof of \cite[Theorem 2.1]{Co75b} can be simplified so that it does not rely on singular states anymore (see \cite[Theorem 15.2.4]{PoAD}). By using Theorem \ref{local_gap_finite}, we obtain an even shorter proof which does not rely on ultraproducts.

\begin{thm} \label{connes_group} Let $N$ be a $\II_1$ factor and let $\sigma : \Gamma \curvearrowright N$ be an action of a discrete group $\Gamma$. Suppose that for every bounded net $(x_i)_{i \in I}$ in $N$ which satisfies $\lim_i \| x_i a -a x_i \|_2$ for all $a \in N$ and $\lim_i \| \sigma_g(x_i)-x_i \|_2 =0$ for all $g \in \Gamma$, we have $\lim_i \| x_i - \tau(x_i) \|_2=0$. 

Then there exists a finite set of unitaries $S \subset \mathcal{U}(N)$, a finite set $K \subset \Gamma$, and a constant $\kappa > 0$ such that for all $x \in N$ we have
\[ \|x-\tau(x)\|_2 \leq \kappa \left( \sum_{u \in S} \|ux-xu \|_2 + \sum_{g \in K} \| \sigma_g(x)-x \|_2 \right) \]
\end{thm}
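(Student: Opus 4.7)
The plan is to invoke Theorem \ref{local_gap_finite} applied to the crossed product and then propagate the resulting corner spectral gap to the whole algebra.

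Concretely, let $M := N \rtimes_\sigma \Gamma$ with its canonical trace extending $\tau$ (which exists because $\sigma$ preserves $\tau$). Consider the subalgebra $N \subset M$ and the self-adjoint set
\[ \Sigma := \mathcal{U}(N) \cup \{u_g : g \in \Gamma\} \subset M. \]
The hypothesis of Theorem \ref{connes_group} translates directly to the hypothesis of Theorem \ref{local_gap_finite} for this data: a bounded net $(x_i) \subset N$ that is asymptotically $\Sigma$-central in $\|\cdot\|_2$ asymptotically commutes with all of $N$ (by spanning with unitaries) and satisfies $\|[x_i, u_g]\|_2 = \|\sigma_{g^{-1}}(x_i) - x_i\|_2 \to 0$, so the given assumption yields $\|x_i - \tau(x_i)\|_2 \to 0$.

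Theorem \ref{local_gap_finite} therefore produces a projection $p \in N$ with $\tau(1-p)$ arbitrarily small, a finite subset $S_0 = S_N \cup \{u_g : g \in K\} \subset \Sigma$ (with $S_N \subset \mathcal{U}(N)$ and $K \subset \Gamma$ finite), and a constant $\kappa > 0$ such that
\[ \|y\|_2 \leq \kappa \sum_{a \in S_0} \|y(pap) - (pap)y\|_2 \quad \text{for every centered } y \in pNp. \]

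The main step is to propagate this corner inequality to all of $N$. Given $x \in N$ with $\tau(x) = 0$, I would decompose $x = pxp + pxq + qxp + qxq$ where $q := 1-p$ and apply the corner gap to the centered element $z := pxp - \frac{\tau(pxp)}{\tau(p)}\, p \in pNp$. The key computation $[z, pap] = p[x, a]p - pxqap$ gives $\|[z, pap]\|_2 \leq \|[x, a]\|_2 + \|xq\|_2$ for unitary $a$ (or $a = u_g$); meanwhile, the off-diagonals satisfy $pxq = -p[p, x]$ and $qxp = -[p, x]p$, both dominated by $\|[p, x]\|_2 = \frac{1}{2}\|[2p - 1, x]\|_2$, so I would include the self-adjoint unitary $2p - 1 \in \mathcal{U}(N)$ in the final set $S := S_N \cup \{2p - 1\}$.

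The principal obstacle is controlling the diagonal residual $\|qxq\|_2$ and the auxiliary $\|xq\|_2$ uniformly in $x$: tracial Cauchy--Schwarz only yields $\|xq\|_2 \leq \|x\|_\infty \sqrt{\tau(q)}$, which introduces dependence on the operator norm of $x$. The resolution relies on the moreover clause: choosing $\tau(q)$ sufficiently small and exploiting homogeneity of the target inequality, one reduces to operator-norm-bounded $x$ by Lipschitz functional-calculus truncations (which contract $\|\cdot\|_2$-commutators by the Lipschitz constant, via standard double-operator-integral estimates in $L^2(N, \tau)$), and handles the $\sigma$-commutators analogously using $\sigma_g(\phi(x)) = \phi(\sigma_g(x))$. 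Absorbing the resulting error terms into the constants produces the desired global inequality with appropriate $\kappa'$, $S$, and $K$.
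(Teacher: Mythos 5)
Your reduction to Theorem \ref{local_gap_finite} via the crossed product $M = N \rtimes_\sigma \Gamma$ with $\Sigma = \mathcal{U}(N) \cup \{u_g : g \in \Gamma\}$ is exactly the paper's route, and so is the use of the self-adjoint unitary $2p-1$ to dominate the off-diagonal pieces $pxq$ and $qxp$. The genuine gap is in your treatment of the diagonal residual $qxq$. The difficulty is not merely that $\|qxq\|_2 \leq \|x\|_\infty \tau(q)^{1/2}$ involves the operator norm; it is that the corner inequality gives no information whatsoever about $qNq$, and $qxq$ can carry essentially all of the $\|\cdot\|_2$-mass of $x$ (take $x$ supported in $qNq$) while commuting exactly with $2p-1$. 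Your proposed fix --- normalizing by homogeneity and then truncating in operator norm by Lipschitz functional calculus --- does not work: normalizing $\|x\|_2 = 1$ places no bound on $\|x\|_\infty$ (consider $x = t\,e$ with $e$ a projection of trace $t^{-2}$), and truncating such an $x$ at a fixed operator-norm level collapses its $\|\cdot\|_2$-norm, so the inequality for the truncation does not imply the inequality for $x$; the difference $x - \phi(x)$ is not controlled by commutators of $x$. No choice of ``$\tau(q)$ sufficiently small'' rescues this, because smallness of $\tau(q)$ does not force smallness of $\|qxq\|_2$ relative to $\|x\|_2$.

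The paper's resolution is the missing idea: since $\tau(p) > \frac{1}{2}$ and $N$ is a $\II_1$ factor, there is a unitary $w \in \mathcal{U}(N)$ with $w(1-p)w^* \leq p$. Conjugating by $w$ moves $qxq$ into the corner $pNp$, where the local gap applies to $wqxqw^*$ (up to its trace), and the error incurred is controlled by $\|wx - xw\|_2$ together with the off-diagonal terms already handled by $2p-1$. This is why the paper enlarges the finite set to $S' = S \cup \{2p-1, w\}$; your final set $S = S_N \cup \{2p-1\}$ omits $w$ and cannot yield the global inequality. (A minor additional slip: your identity $[z, pap] = p[x,a]p - pxqap$ is missing the symmetric correction term $paqxp$, though both corrections are of the same off-diagonal type and would be absorbed the same way.)
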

\begin{proof}
Let $M=N \rtimes_\sigma \Gamma$. Let $\Sigma=\mathcal{U}(N) \cup \{u_g \mid g \in \Gamma \}$. A direct application of Theorem \ref{local_gap_finite} shows that we can find a projection $p \in N$ with $\tau(p) > \frac{1}{2}$, finite sets $S \subset \mathcal{U}(N)$ and $K \subset \Gamma$, and a constant $\kappa > 0$ such that for all $x \in pNp$ with $\tau(x)=0$ we have
\[ \|x\|_2 \leq \kappa \left( \sum_{u \in S} \|p(ux-xu)p \|_2 + \sum_{g \in K} \| p(u_gx-xu_g)p \|_2 \right) \] 
Let $v=2p-1 \in \mathcal{U}(N)$ and let $w \in \mathcal{U}(N)$ be any unitary which satisfies $w(1-p)w^* \leq p$. Let $S'=S \cup \{v,w \}$. Then it is not hard to check that there exists some constant $\kappa' > 0$ such that for all $x \in N$ we have
\[ \|x-\tau(x) \|_2 \leq \kappa' \left( \sum_{u \in S'} \|ux-xu \|_2 + \sum_{g \in K} \| \sigma_g(x)-x \|_2 \right) \]
\end{proof}


We also obtain a new proof of \cite[Theorem A]{Ma16}, with a more precise conclusion regarding the choice of the state. 

\begin{thm}
Let $M$ be a full $\sigma$-finite type $\III$ factor. Then there exists a faithful normal state $\varphi$ on $M$, a finite set of positive $\varphi$-bounded elements $S \subset \rL^2(M)$ and a constant $\kappa > 0$ such that
\[ \forall x \in M, \quad \|x-\varphi(x)\|_\varphi \leq \kappa \sum_{ \xi \in S} \|x \xi - \xi x \| \]
Moreover, if $\psi$ is any given faithful normal state on $M$, we can choose $\varphi$ to be of the form $\varphi=\frac{1}{\psi(vv^*)} v^{*}\psi v$ where $v \in M$ is an isometry which is arbitrarily close to $1$.
\end{thm}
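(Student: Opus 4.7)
The plan is to apply Theorem \ref{local_gap_all} to $M$ itself and then transport the resulting local inequality from a large corner $pMp$ back to all of $M$ using the type $\III$ structure.

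Fullness of $M$ says that every bounded net $(x_i) \subset M$ that satisfies $\|x_i\xi - \xi x_i\| \to 0$ for all $\xi \in \rL^2(M)$ must satisfy $\|x_i - \psi(x_i)\|_\psi \to 0$. Since $(x_i)$ is bounded, this asymptotic centralization condition already follows from requiring it on any $J$-invariant set of $\psi$-bounded vectors whose linear span is dense in $\rL^2(M)$; I would take $\Sigma$ to be the set of positive $\psi$-bounded vectors, which is $J$-invariant because $J$ fixes the standard positive cone pointwise. The hypothesis of Theorem \ref{local_gap_all} is then satisfied with $N = M$ and reference state $\psi$, and it produces a projection $p \in M$ with $\psi(p)$ arbitrarily close to $1$, a finite set $S \subset \Sigma$ of positive $\psi$-bounded vectors, and a constant $\kappa_0 > 0$ such that, writing $\psi_p := \psi(p)^{-1}\, p\psi p$,
\[ \|y - \psi_p(y)\|_{\psi_p} \leq \kappa_0 \sum_{\xi \in S}\|p(y\xi - \xi y)p\| \qquad \text{for every } y \in pMp. \]

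Since $M$ is a type $\III$ factor, any nonzero projection is equivalent to $1$, so I would choose an isometry $v \in M$ with $v^*v = 1$ and $vv^* = p$. Using the $2 \times 2$ matrix decomposition of $M$ with respect to $p$, together with the proper infiniteness of $pMp$, one can arrange such a $v$ to be arbitrarily close to $1$ (in the $\|\cdot\|_\psi$-norm, say) when $\psi(1-p)$ is small; this is the main technical step and the only place where the type $\III$ hypothesis is really used. I would then set $\varphi := \psi(p)^{-1} v^*\psi v$, a faithful normal state on $M$ of the prescribed form.

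The remainder is a direct transport of the inequality above via the $*$-isomorphism $\alpha : M \to pMp$, $\alpha(x) = vxv^*$, whose inverse is $y \mapsto v^*yv$. A routine computation using $pv = v$ and $v^*v = 1$ gives $\psi_p \circ \alpha = \varphi$ and $\|\alpha(x) - \varphi(x)\|_{\psi_p} = \|x - \varphi(x)\|_\varphi$, so applying the inequality to $y = \alpha(x)$ produces $\|x - \varphi(x)\|_\varphi$ on the left. For the right-hand side, set $\eta_\xi := v^*\xi v$ for $\xi \in S$. The bimodule identity
\[ v(x\eta_\xi - \eta_\xi x)v^* = p(\alpha(x)\xi - \xi\alpha(x))p, \]
together with $v^*v = 1$, gives $\|p(\alpha(x)\xi - \xi\alpha(x))p\| = \|x\eta_\xi - \eta_\xi x\|$, whence
\[ \|x - \varphi(x)\|_\varphi \leq \kappa_0 \sum_{\xi \in S}\|x\eta_\xi - \eta_\xi x\| \qquad \text{for all } x \in M. \]
To finish, one checks that each $\eta_\xi$ is a positive $\varphi$-bounded element of $\rL^2(M)$: positivity because $\xi \mapsto v^*\xi v = (v^* J v^* J)\xi$ preserves the standard positive cone, and $\varphi$-boundedness because writing $\xi = a\xi_\psi$ gives $\|x\eta_\xi\|^2 \leq \|a\|^2 \psi(vx^*xv^*) = \|a\|^2 \psi(p)\|x\|_\varphi^2$ for all $x \in M$. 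The only step I expect to require real care is the construction of an isometry $v$ close to $1$; everything else is formal manipulation in the standard form.
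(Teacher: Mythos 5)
Your proposal is correct and follows essentially the same route as the paper: use fullness plus the density of the span of the positive $\psi$-bounded vectors to verify the hypothesis of Theorem \ref{local_gap_all}, obtain a large corner $p$ with local spectral gap, and then use the type $\III$ structure to conjugate everything back to $M$ by an isometry $v$ with $vv^*=p$ close to $1$. The paper compresses the transport step into ``it is easy to see''; your explicit verification of $\psi_p\circ\alpha=\varphi$, of the identity $v(x\eta_\xi-\eta_\xi x)v^*=p(\alpha(x)\xi-\xi\alpha(x))p$, and of the positivity and $\varphi$-boundedness of $v^*\xi v$ (the last of which does require the $J$-fixedness of positive vectors to turn the left action of $a$ into one whose operator norm can be extracted) is a faithful filling-in of exactly that step.
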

\begin{proof}
Let $\psi$ be any faithful normal state on $M$. Let $\Sigma \subset \rL^2(M)$ be the set of all positive $\psi$-bounded vectors. Note that $\Sigma$ spans a dense linear subspace of $\rL^2(M)$ (use for example the density of $\psi$-analytic elements in $M$). Hence, since $M$ is full, we know that every uniformly bounded net $(x_i)_{i \in I}$ which centralizes $\Sigma$ is trivial. Therefore, by Theorem \ref{local_gap_all}, we can find a non-zero projection $p \in M$ and a finite subset $T \subset \Sigma$ such that $pTp$ has spectral gap in $(pMp,\frac{1}{\psi(p)}p\psi p)$. But since $M$ is of type $\III$, we can find an isometry $v \in M$ such that $vv^{*}=p$. Then, it is easy to see that for $\varphi :=\frac{1}{\psi(p)}v^{*}\psi v$, the set $S=v^{*}Tv$ is $\varphi$-bounded and has spectral gap in $(M,\varphi)$. Finally, since $p$ can be chosen arbitrarily close to $1$, $v$ can also be chosen arbitrarily close to $1$.
\end{proof}

Finally, we conclude with a proposition which emphasizes the difference between local spectral gap and true spectral gap in $\II_1$ factors. Note that if $\Gamma$ is not inner amenable then $\mathcal{L}(\Gamma)$ is full \cite{Ef75} but the converse is not true \cite{Va09}.

\begin{prop}
Let $\Gamma$ be an i.c.c group. Let $M=\mathcal{L}(\Gamma)$ be the associated $\II_1$ factor. If $M$ is full then we can find a projection $p \in M$, arbitrarily close to $1$, a finite set $K \subset \Gamma$ and a constant $\kappa > 0$ such that for all $x \in pMp$ with $\tau(x)=0$ we have
\[ \| x \|_2 \leq \kappa \sum_{g \in K} \| p(xu_g-u_gx)p \|_2 \]
Moreover, one can choose $p=1$ if and only if $\Gamma$ is not inner amenable.
\end{prop}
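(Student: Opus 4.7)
The plan is to apply Theorem \ref{local_gap_finite} directly for the first assertion, and then to extract the dichotomy $p=1$ vs $p\neq 1$ from the classical characterization of inner amenability via the conjugation representation.

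For the first part, I would take $N=M$ and $\Sigma=\{u_g : g \in \Gamma\}$ in Theorem \ref{local_gap_finite}. Its hypothesis amounts to showing that every bounded net $(x_i)$ in $M$ with $\|u_g x_i - x_i u_g\|_2 \to 0$ for all $g \in \Gamma$ satisfies $\|x_i - \tau(x_i)\|_2 \to 0$. Since $\{u_g : g \in \Gamma\}$ generates $M$ as a von Neumann algebra, a Kaplansky density argument combined with the tracial bounds $\|yx\|_2,\|xy\|_2 \leq \|x\|\,\|y\|_2$ upgrades the asymptotic commutation from $\{u_g\}$ to all of $M$; fullness of $M$ then yields $\|x_i - \tau(x_i)\|_2 \to 0$. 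Theorem \ref{local_gap_finite} then delivers the desired projection $p$ (arbitrarily close to $1$), finite set $K \subset \Gamma$, and constant $\kappa>0$.

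For the ``moreover'' part, I would use the fact that $\Gamma$ is inner amenable if and only if the conjugation unitary representation $\Ad\, u_g : \xi \mapsto u_g \xi u_g^*$ on $\rL^2(M) \ominus \C 1 \simeq \ell^2(\Gamma \setminus \{e\})$ admits almost invariant vectors; equivalently, $\Gamma$ is not inner amenable if and only if there exist a finite $K \subset \Gamma$ and $\kappa>0$ with $\|\xi\| \leq \kappa \sum_{g \in K} \|u_g \xi u_g^* - \xi\|$ for all such $\xi$. Since $u_g$ is unitary, $\|u_g \xi u_g^* - \xi\| = \|u_g \xi - \xi u_g\|$. If $\Gamma$ is not inner amenable, restricting this inequality to the dense subspace $\{x \in M : \tau(x)=0\}$ gives the claim with $p=1$. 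Conversely, if the inequality holds with $p=1$, a Kaplansky-density-plus-trace-subtraction argument extends it by $\|\cdot\|_2$-continuity to all $\xi \in \rL^2(M) \ominus \C 1$, producing the spectral gap of the conjugation representation and hence the non-inner-amenability of $\Gamma$.

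There is no serious obstacle: the proposition is essentially a repackaging of Theorem \ref{local_gap_finite} together with a well-known representation-theoretic criterion for (non-)inner-amenability. The only point demanding any care is the density extension in the converse of the ``moreover'' part, which is routine in the tracial setting.
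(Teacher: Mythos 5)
Your proposal is correct and follows essentially the same route as the paper, whose proof is the single line ``direct application of Theorem~\ref{local_gap_finite} with $\Sigma=\{u_g \mid g\in\Gamma\}$''. You additionally spell out the ``moreover'' clause (which the paper leaves implicit) via the standard identification of non-inner-amenability with spectral gap of the conjugation representation on $\rL^2(M)\ominus\C 1\simeq\ell^2(\Gamma\setminus\{e\})$, and that argument is sound; note only that the extension to $\rL^2(M)\ominus\C 1$ needs nothing more than $\|\cdot\|_2$-density of $M$ and boundedness of $\xi\mapsto u_g\xi-\xi u_g$, so invoking Kaplansky density there is unnecessary.
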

\begin{proof}
It is a direct application of Theorem \ref{local_gap_finite} with $\Sigma=\{ u_g \mid g \in \Gamma \}$.
\end{proof}

\bibliographystyle{plain}

\end{document}